\pdfoutput=1
\newif\ifspringer
\springerfalse
\documentclass[a4paper,reqno,12pt]{amsart}

\usepackage[utf8]{inputenc}
\usepackage[T1]{fontenc}
\usepackage{microtype}

\usepackage{amssymb, amsthm, amssymb, graphicx, enumerate, enumitem, color}
\usepackage[usenames,dvipsnames,svgnames,table]{xcolor}
\usepackage[foot]{amsaddr}
\usepackage{wrapfig}

\addtolength{\textwidth}{3 truecm}
\addtolength{\textheight}{1 truecm}
\setlength{\voffset}{-.6 truecm}
\setlength{\hoffset}{-1.3 truecm}
\parindent 0mm
\parskip   5mm

\colorlet{mylinkcolor}{violet}
\colorlet{mycitecolor}{YellowOrange}
\colorlet{myurlcolor}{Aquamarine}
\usepackage[unicode=true]{hyperref}
\hypersetup{
	pdftitle={On an extremal problem for poset dimension},
	pdfauthor={Grzegorz Guśpiel, Piotr Micek, and Adam Polak},
	colorlinks,
	linkcolor={red!50!black},
	citecolor={blue!50!black},
	urlcolor={blue!80!black}
}

\newtheorem{theorem}{Theorem}

\theoremstyle{remark}

\usepackage{todonotes}

\theoremstyle{plain}
\theoremstyle{definition}

\newcommand{\set}[1]{\left\{#1\right\}}

\newcommand{\floor}[1]{{\left\lfloor #1 \right\rfloor}}
\newcommand{\ceil}[1]{{\left\lceil #1 \right\rceil}}

\newcommand{\N}[0]{\mathbb{N}}

\newcommand{\pdim}[1]{\mathrm{dim}(#1)}

\newcommand{\setR}{\mathbb{R}}

\renewcommand{\O}{\mathcal{O}}

\let\leq\leqslant
\let\geq\geqslant

\let\subset\subseteq

\let\epsilon\varepsilon

\sloppy


\makeatletter
\let\old@setaddresses\@setaddresses
\def\@setaddresses{\bigskip\bgroup\parindent 0pt\let\scshape\relax\old@setaddresses\egroup}
\makeatother

\makeatletter
\def\paragraph{\@startsection{paragraph}{4}%
  \z@\z@{-\fontdimen2\font}%
  {\normalfont\bfseries}}
\makeatother

\begin{document}

\title[]{On an extremal problem for poset dimension}

\author[G.~Guśpiel]{Grzegorz Guśpiel}
\email{guspiel@tcs.uj.edu.pl}
\author[P.~Micek]{Piotr Micek}
\author[A.~Polak]{Adam Polak}
\email{polak@tcs.uj.edu.pl}
\address[G.~Guśpiel, A.~Polak, P.~Micek]{Theoretical Computer Science Department\\
	Faculty of Mathematics and Computer Science, Jagiellonian University, Krak\'ow, Poland}
\email{piotr.micek@tcs.uj.edu.pl}

\thanks{
  Grzegorz Guśpiel was partially supported by the MNiSW grant DI2013 000443.
  Piotr Micek was partially supported by the National Science Center of Poland under grant no.\ 2015/18/E/ST6/00299.
  Adam Polak was partially supported by the Polish Ministry of Science and Higher Education program ``Diamentowy Grant''.
}

\date{\today}

\begin{abstract}
Let $f(n)$ be the largest integer such that every poset on $n$ elements has a $2$-dimensional subposet on $f(n)$ elements.
What is the asymptotics of $f(n)$?
It is easy to see that $f(n)\geq n^{1/2}$.
We improve the best known upper bound and show $f(n)=\O(n^{2/3})$.
For higher dimensions, we show $f_d(n)=\O\left(n^\frac{d}{d+1}\right)$, where
$f_d(n)$ is the largest integer such that every poset on $n$ elements has
a $d$-dimensional subposet on $f_d(n)$ elements.
\end{abstract}

\maketitle

\section{Introduction}

Every partially ordered set on $n$ elements has a chain or an antichain of size at least $n^{1/2}$,
this is an immediate consequence of Dilworth's Theorem or its easier dual counterpart.
Chains and antichains are very special instances of $2$-dimensional posets.
Surprisingly, the following simple problem is open:

{\it
Let $f(n)$ be the largest integer such that every poset on $n$ elements has a $2$-dimensional subposet on $f(n)$ elements.
What is the asymptotics of $f(n)$?
}

Although this sounds like a natural extremal-type question for posets,
it was posed only in 2010, by François Dorais~\cite{Dorais2010}.
Clearly, $n^{1/2} \leq f(n) \leq n$.
Reiniger and Yeager~\cite{Reiniger2016} proved a sublinear upper bound, that is
$f(n)=\O(n^{0.8295})$.
Their construction is a lexicographic power of standard examples.

The main idea behind our contribution was a belief that a $(k \times k)$-grid
is asymptotically the largest $2$-dimensional subposet of the
$(k \times k \times k)$-cube. This led us to the following theorem:
\begin{theorem}\label{thm:main}
\[
f(n) \leq 4n^{2/3} + o\left(n^{2/3}\right).
\]
\end{theorem}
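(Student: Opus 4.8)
The plan is to certify the upper bound by a single explicit family of posets. For $n=k^{3}$ I would take $P$ to be the grid $[k]^{3}$ with the coordinatewise order, so that $n^{2/3}=k^{2}$, and reduce everything to the following Main Lemma: \emph{every $2$-dimensional (induced) subposet $S\subseteq[k]^{3}$ has $\norm{S}\le 4k^{2}+o(k^{2})$}. The reduction to general $n$ is routine: set $k=\ceil{n^{1/3}}$ and let $Q_{n}$ be any induced subposet of $[k]^{3}$ on exactly $n\le k^{3}$ elements; since a $2$-dimensional subposet of $Q_{n}$ is in particular a $2$-dimensional subposet of $[k]^{3}$, the Main Lemma gives that the largest such subposet has at most $4k^{2}+o(k^{2})=4n^{2/3}+o(n^{2/3})$ elements, which is exactly the asserted bound on $f(n)$. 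Thus the whole argument rests on the Main Lemma.

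To prove the Main Lemma I would fix a realizer of $S$, that is, two linear extensions $L_{1},L_{2}$ of the coordinatewise order with $S=L_{1}\cap L_{2}$, and switch to the equivalent planar dominance picture $s\mapsto(x_{s},y_{s})$, where $x_{s},y_{s}$ are the positions of $s$ in $L_{1},L_{2}$ and $s<t$ in $S$ exactly when $x_{s}<x_{t}$ and $y_{s}<y_{t}$. Two consequences of this picture drive the count. First, each coordinate $s\mapsto s_{i}$ is monotone along chains, so any chain of $S$ is a chain of $[k]^{3}$ and hence has at most $3k-2$ elements; this bounds the height of $S$. Second, a $2$-dimensional poset contains no copy of the standard example $S_{3}$, and $[k]^{3}$ is full of copies of $S_{3}$ (every $2\times2\times2$ Boolean subcube contains one); the combinatorial content of the lemma is that avoiding all of them forces $S$ to be thin.

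The concrete device I would use is the projection $\rho\colon s\mapsto(s_{1},s_{2})$ to a coordinate plane, whose fibers are the vertical columns $\{(a,b,c):c\in[k]\}\cap S$; these are chains, so $\norm{S}=\sum_{(a,b)\in[k]^{2}}f(a,b)$ where $f(a,b)\le k$ is the fiber size. There are only $k^{2}$ cells, so the task is to show that the fibers cannot be large too often. The two extremes are instructive and should bracket the truth: a single middle antichain of $[k]^{3}$ (one layer of the height function $s_{1}+s_{2}+s_{3}$) injects into the plane and gives about $\tfrac{3}{4}k^{2}$ cells of fiber one, whereas a monotone staircase in the $(s_{1},s_{2})$-plane crossed with the full $s_{3}$-axis is a product of two chains, hence $2$-dimensional, and realizes about $2k^{2}$ elements with few cells of fiber $k$. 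The plan is to interpolate between these regimes: I would partition $S$ into the at most $3k-2$ antichains $L_{v}=\{s:s_{1}+s_{2}+s_{3}=v\}$ and argue, via the no-$S_{3}$ / comparability-graph structure from the planar representation, that the layers which are simultaneously large must have their mass aligned along a chain in the $(s_{1},s_{2})$-plane, so that the total fiber mass is at most a constant times $k^{2}$.

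The main obstacle is exactly this last step: turning the qualitative statement that $S$ is a comparability graph (equivalently, $S_{3}$-free) into the quantitative bound $\sum f(a,b)\le 4k^{2}+o(k^{2})$. The difficulty is that no crude estimate suffices on its own, since for a $2$-dimensional subposet of the cube the height can be as large as $3k-2$ while the width can be as large as $k^{2}$ (the column cover shows $\mathrm{width}\le k^{2}$), and their product is far larger than $k^{2}$; one must rule out a long chain and a large antichain coexisting beyond the $\O(k^{2})$ budget. I expect to control this by a charging argument on the planar staircase: along each realizer order a coordinate can increase at most $k-1$ times, and the coordinate-disagreement property of incomparable pairs limits how a wide layer can sit above another, pinning the total at $\O(k^{2})$. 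The clean constant $4$ should come out of combining the height factor $3k$ with the loss incurred in this charging; the true maximum appears to be closer to $2k^{2}$ (witnessed by the staircase-times-axis construction), so $4$ is a deliberately generous bound, and the $o(k^{2})$ term absorbs both the boundary layers and the rounding in $k=\ceil{n^{1/3}}$.
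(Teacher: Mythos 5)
Your overall skeleton (pick an explicit extremal family, bound its largest $2$-dimensional subposet by $4k^2+o(k^2)$, reduce general $n$ by monotonicity/rounding) matches the paper's, and the rounding step at the end is fine. But the proposal has a genuine gap at exactly the point you flag as ``the main obstacle'': you never prove the Main Lemma. Everything after the planar-dominance setup is a plan expressed in the conditional (``I would interpolate\dots'', ``I expect to control this\dots'', ``the clean constant $4$ should come out\dots''). The step that turns $S_3$-freeness of a subposet of the cube into the bound $\sum_{(a,b)}f(a,b)\le 4k^2+o(k^2)$ is precisely the entire content of the theorem, and no actual argument is given: the proposed charging scheme is not specified, and your own accounting (height up to $3k-2$, width up to $k^2$) shows why no product of crude bounds works. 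Note also two smaller issues: dimension at most $2$ is \emph{not} equivalent to $S_3$-freeness (there are many $3$-irreducible posets besides $S_3$), though you only need the forward implication; and it is not established anywhere that the constant $4$ is correct for the grid $[k]^3$ with the product order --- the paper proves only an $\O(n^d)$ bound for grids, with an unspecified and much larger constant coming from the Klazar--Marcus theorem.

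It is worth seeing how the paper avoids this difficulty: it does \emph{not} use the product order on $[r]^3$. It defines a different poset $\mathbf{C_r}$ on the ground set $[r]^3$, in which $(x_1,y_1,z_1)\le(x_2,y_2,z_2)$ iff $z_1\le z_2$ and ($y_1<y_2$, or $y_1=y_2$ and $x_1=x_2$). For this order a completely elementary double pigeonhole shows that any subposet with at least $4r^2$ elements contains a specific $7$-element $3$-irreducible poset (delete the lowest point of each $(x,y)$-column, find a heavy $z$-layer, find a minimal-$y$ point $A$ and three points $B_1,B_2,B_3$ sharing a higher $y$-value, then recover points $C_i$ below the $B_i$ from the deletion rule). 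The choice of $\mathbf{C_r}$ is what makes the constant $4$ come out cleanly; your grid-based route would, if completed, be a genuinely stronger and more natural statement, but as written it is a research programme rather than a proof.
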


Recall that the \emph{dimension} $\dim(P)$ of a poset $P$
is the least integer $d$ such that elements of $P$ can be embedded into $\setR^d$ in such a way that $x< y$ in $P$ if and only if
the point of $x$ is below the point of $y$ with respect to the product order on $\setR^d$.
Equivalently, the dimension of $P$ is the least $d$ such that there are $d$ linear extensions of $P$ whose intersection is $P$.
By convention, whenever we say a poset is $d$-dimensional, we mean its dimension
is at most $d$.

Reiniger and Yeager~\cite{Reiniger2016} also studied the guaranteed size of the
largest $d$-dimensional subposet of poset on $n$ elements.
Let $f_d(n)$ be the largest integer such that every poset on $n$ elements has a $d$-dimensional subposet on $f_d(n)$ elements.
They proved, in particular, that $f_d(n) = \O(n^{g})$, where $g = \log_{2d+2} (2d+1)$.

Let $[n]$ denote $\set{0, 1, \ldots, n - 1}$.
By the $\mathbf{n^d}$-grid we mean the poset on the ground set $[n]^d$ with the natural
product order, i.e.~$(x_1, x_2, \ldots, x_d) \leq (y_1, y_2, \ldots, y_d)$ if $x_i \leq y_i$ for all $i$.
Note that the $\mathbf{n^d}$-grid is a $d$-dimensional poset. Moreover, it is easy to see
that the $\mathbf{n^{d+1}}$-grid contains as a subposet the $\mathbf{n^d}$-grid -- simply fix one
coordinate to an arbitrary value.
We prove that this is asymptotically the largest
$d$-dimensional subposet of the $\mathbf{n^{d+1}}$-grid.
For $d \leq 7$, this observation improves on the best known upper bound for the
asymptotics of $f_d(n)$.

\begin{theorem}\label{thm:general-d}
\[
f_d(n) = \O\left(n^{\frac{d}{d+1}}\right).
\]
\end{theorem}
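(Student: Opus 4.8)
The plan is to show that the $\mathbf{k^{d+1}}$-grid witnesses the bound, i.e.\ that its largest $d$-dimensional subposet has only $\O(k^d)$ elements; Theorem~\ref{thm:general-d} then follows by a routine monotonicity argument.

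First I would reduce to a statement about the grid. Write $g_d(k)$ for the number of elements of a largest $d$-dimensional subposet of the $\mathbf{k^{d+1}}$-grid. Since this grid is a poset on $k^{d+1}$ elements, the definition of $f_d$ gives $f_d(k^{d+1})\le g_d(k)$. The function $f_d$ is non-decreasing: deleting one element from an arbitrary $n$-element poset and applying the definition to the remaining $(n-1)$-element poset produces a $d$-dimensional subposet of size at least $f_d(n-1)$ inside the original, so $f_d(n)\ge f_d(n-1)$. Hence, choosing $k=\floor{n^{1/(d+1)}}$ (so that $n\le(k+1)^{d+1}$) and bounding $f_d(n)$ by $f_d\brac{(k+1)^{d+1}}\le g_d(k+1)$, a bound $g_d(k)=\O(k^d)$ immediately yields $f_d(n)=\O\brac{n^{d/(d+1)}}$. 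So the whole problem is the following claim: every subposet $Q\subseteq[k]^{d+1}$ with $\dim(Q)\le d$ satisfies $|Q|=\O(k^d)$.

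To prove the claim I would mirror the argument behind Theorem~\ref{thm:main}, now playing $d$ linear extensions against $d+1$ coordinates. Fix a realizer $L_1,\dots,L_d$ of $Q$, so that the order of $Q$ is $\bigcap_{i=1}^d L_i$, and recall that the $d+1$ coordinate orders $C_1,\dots,C_{d+1}$ of the grid (with ties broken consistently) are linear extensions of $Q$ with $\bigcap_{j=1}^{d+1}C_j$ equal to the same order. Two preliminary reductions are essentially free. The grid rank $\rho(x)=x_1+\dots+x_{d+1}$ increases strictly along every chain, so $Q$ has height $\O(k)$ and splits into $\O(k)$ antichains $\rho^{-1}(r)$; and forgetting any one coordinate projects $Q$ onto $[k]^d$ with every fiber a chain of length at most $k$. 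Either observation bounds $|Q|$ by a product of a width term and a height term.

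The hard part is that these two preliminary reductions, used separately, give only $|Q|\le(\text{width})\cdot(\text{height})=\O(k^d)\cdot\O(k)=\O(k^{d+1})$, which is just the size of the whole grid; the entire difficulty is to save one factor of $k$, and this saving must come from $\dim(Q)\le d$ rather than from $Q\subseteq[k]^{d+1}$ alone (the maximum antichain already has $\Theta(k^d)$ elements, so the target is genuinely tight). The mechanism I would pursue is a double counting that pits the $d$ extensions $L_i$ against the $d+1$ coordinate orders $C_j$: because a $d$-element realizer reproduces a comparability pattern that the grid itself realizes only with $d+1$ coordinates, some coordinate direction must, on average, be redundant, and I would try to turn this redundancy into an injection of $Q$ into an $\O(k^d)$-sized set---for instance by charging each element to its image after discarding the redundant coordinate and bounding the resulting multiplicity. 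I expect to carry this counting out first in the case $d=2$, recovering Theorem~\ref{thm:main}, and then to verify that the argument is uniform in $d$; the main risks are keeping the multiplicities bounded independently of $d$ and preventing the implied constant from degrading, which is exactly why the theorem is stated only with an $\O(\cdot)$ and no explicit constant.
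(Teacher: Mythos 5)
Your reduction to the grid is correct and matches the paper's framing: monotonicity of $f_d$ plus the claim that every $d$-dimensional subposet of the $\mathbf{k^{d+1}}$-grid has $\O(k^d)$ elements does yield $f_d(n)=\O\brac{n^{d/(d+1)}}$, and your remark that the bound is tight (a maximum antichain of the grid already has $\Theta(k^d)$ elements) correctly identifies where the difficulty lies. But the proof stops exactly there. Everything after ``The mechanism I would pursue'' is a research plan, not an argument: no injection is constructed, no multiplicity is bounded, and the proposed ``double counting pitting the $d$ extensions against the $d+1$ coordinate orders'' is never instantiated even for $d=2$. As written, the central claim --- the only part of the theorem that is not routine --- is unproved.

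The gap is not merely one of exposition. The paper does not prove the grid claim by an elementary counting argument; it invokes the Klazar--Marcus multidimensional generalization of the Marcus--Tardos theorem (Theorem~\ref{thm:avoidance}): any $A\subset[k]^{d+1}$ avoiding a fixed $(d+1)$-dimensional permutation pattern has $\O(k^d)$ elements. The bridge to dimension is the standard example $S_{d+1}$: fixing a realizer of $S_{d+1}$ by $d+1$ linear orders yields a $(d+1)$-dimensional permutation $P$ whose product order is isomorphic to $S_{d+1}$; a $d$-dimensional subposet of the grid cannot contain $S_{d+1}$, hence avoids $P$ as a matrix pattern, and the $\O(k^d)$ bound follows. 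Your plan contains neither the idea of encoding a $(d+1)$-dimensional forbidden poset as a permutation pattern nor any substitute for the pattern-avoidance theorem, which is a genuinely nontrivial external ingredient (already in two dimensions it is the F\"uredi--Hajnal conjecture resolved by Marcus and Tardos). Without it, or without an actual worked-out counting scheme, the claim that ``some coordinate direction must, on average, be redundant'' remains a heuristic, and there is no reason offered why the charging you describe would have bounded multiplicity.
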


In order to show this we apply a multidimensional version of the theorem
by Marcus and Tardos~\cite{Marcus2004} saying that the number of $1$-entries in
an $n\times n$ $(0,1)$-matrix that avoids a fixed permutation matrix $P$ is
$\O(n)$.
The multidimensional version was proved by Klazar and Marcus~\cite{Klazar2007},
and then independently by Methuku and Pálvölgyi~\cite{Methuku2017}, who applied
it to another extremal problem related to subposets, i.e.~they proved that for
every poset~$P$ the size of any family of subsets of $[n]$ that does not contain
$P$ as a subposet is at most $\O\left(\binom{n}{\floor{n/2}}\right)$.

\section{Dimension two}

If we ignore a multiplicative constant, Theorem~\ref{thm:main} becomes a special
case of Theorem~\ref{thm:general-d}. Still, we provide a short and simple proof
of Theorem~\ref{thm:main}, as we believe it might provide a better insight to
the core of the problem.

\begin{proof}[Proof of Theorem~\ref{thm:main}]
First we argue for values of $n$ such that $n=r^3$ for some $r\in\mathbb{N}$.
Then at the end of the proof we address the general case.

  Let $\mathbf{C_r}$ be the poset with the ground set $[r]^3$, where
  $(x_1, y_1, z_1) \leq_{\mathbf{C_r}} (x_2, y_2, z_2)$ if
  \[
    (z_1 \leq z_2)\quad \text{and}\quad
    (y_1 < y_2 \text{ or } (y_1=y_2 \text{ and } x_1=x_2)),
  \]
see~Figure~\ref{fig:C0}.

  \begin{figure}[h]
	\centering
    \ifspringer
      \includegraphics[scale=0.7]{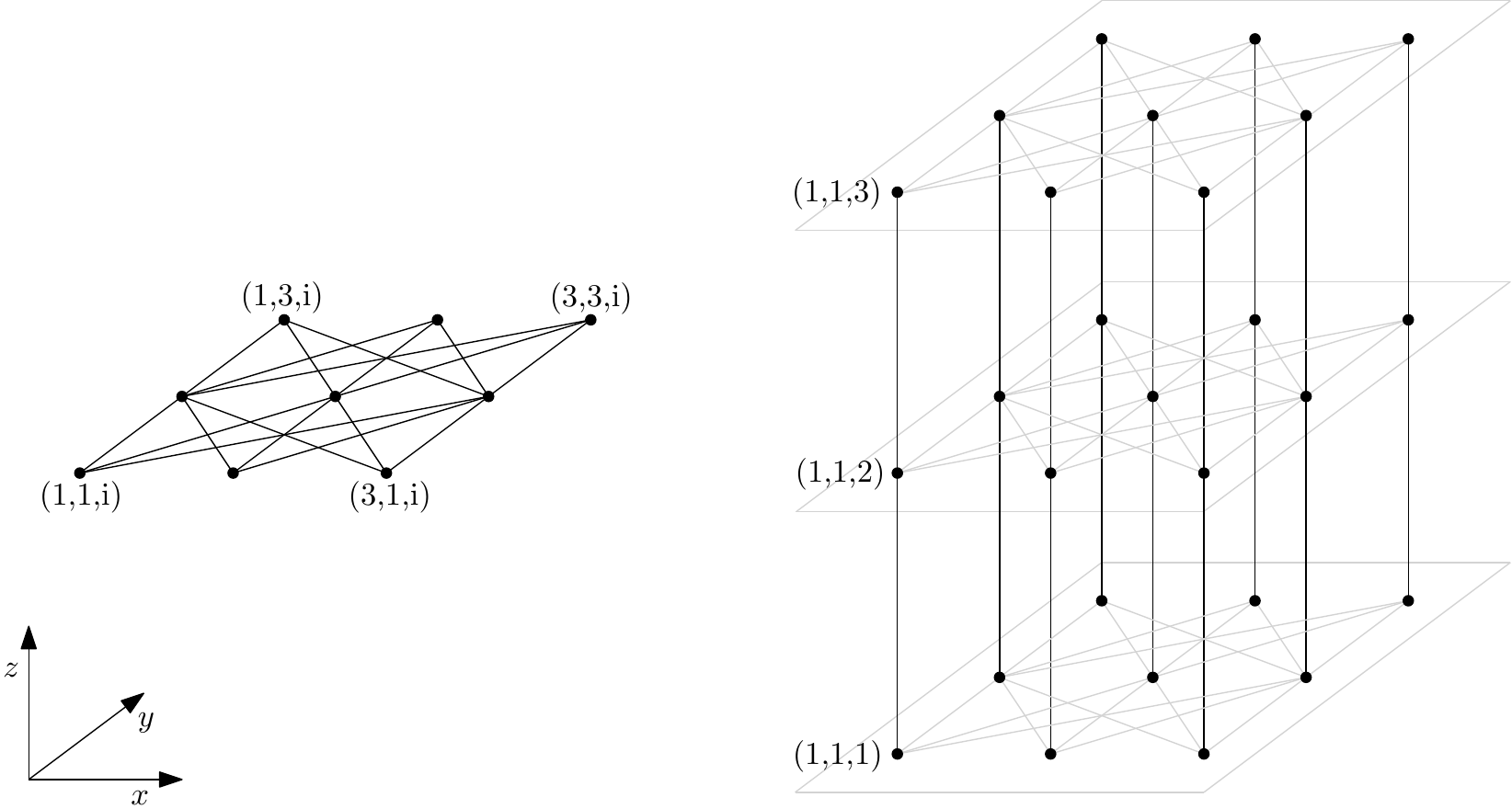}
    \else
      \includegraphics[scale=0.8]{warstwa}
    \fi
    \caption{\label{fig:C0}A subposet of $\mathbf{C_3}$ composed of all elements with $z$-coordinate equal to~$i$ (on the left) and the poset $\mathbf{C_3}$ itself (on the right).}
  \end{figure}

  Consider any subposet $S$ of $\mathbf{C_r}$ such that $|S| \geq 4r^2$.
  We will prove that $\pdim{S} > 2$ by showing that $S$ contains as a
  subposet the poset\footnote{This is one of the $3$-irreducible
  posets, which are listed in~\cite{Trotter1992}.} of dimension $3$
  presented in Figure~\ref{fig:spider}.

  \begin{figure}[h]
    \begin{center}
    \includegraphics[scale=0.8]{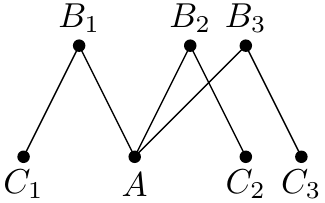}
    \end{center}
    \caption{\label{fig:spider} A poset of dimension $3$ found in any subposet of $\mathbf{C_r}$ of size at least $4r^2$.}
  \end{figure}

Let $S^1$ be the poset obtained from $S$ by removing every element $(x,y,z)$ such that $S$ contains
no element $(x,y,z')$ with $z' < z$.
Note that $|S^1| \geq 3r^2$, as for every pair $(x,y) \in [r]^2$ at most one element is removed.
Now by the pigeonhole principle, we get that
$S^1$ contains a subposet $S^2$ on at least $3r$ elements
such that all elements of $S^2$ have the same $z$-coordinate.

Let $A$ be any point in $S^2$ with the minimal $y$-coordinate
and let $S^3$ be the subposet of $S^2$ obtained by removing all points with the same
$y$-coordinate as $A$.
As there can be at most $r$ points with the same $y$-coordinate, $|S^3| \geq 2r$.
By the pigeonhole principle for $r-1$ containers,
$S^3$ contains three points with the same $y$-coordinate, say $B_1 = (x_1, y, z)$, $B_2 = (x_2, y, z)$, $B_3 = (x_3, y, z)$.
Thanks to the removal rule that led to the creation of $S^1$,
the poset $S$ contains points $C_1 = (x_1, y, z_1)$, $C_2 = (x_2, y, z_2)$, $C_3 = (x_3, y, z_3)$ for some $z_1,z_2,z_3 < z$.

  One can easily verify that the subposet $\set{A, B_1, B_2, B_3, C_1, C_2, C_3}$
  of $S$ is the poset in Figure~\ref{fig:spider}.
  Since it has dimension $3$, we have $\pdim{S} > 2$, which concludes the proof for $n$ being a perfect cube.

  Now, fix any $n \in \N$, and let $r = \ceil{\sqrt[3]{n}}$. Note that $f$ is
  non-decreasing, thus
  \[ f(n) \leq f(r^3) \leq 4r^2 \leq 4(\sqrt[3]{n} + 1)^2 =
     4n^{2/3} + o\left(n^{2/3}\right).\]
\end{proof}

With a more tedious analysis, which involves one more forbidden subposet
and removal of both lowest and highest $z$-coordinate
points in each $(x, y)$-column, we can prove a slightly stronger upper bound,
i.e.~$f(n) \leq 3n^{2/3} + o\left(n^{2/3}\right)$. However, we do not know how to
improve on the asymptotics of $f$.

\section{Higher dimensions}

In this section we prove Theorem~\ref{thm:general-d}.
In order to do this we apply a multidimensional version of the theorem by Marcus
and Tardos~\cite{Marcus2004}, proved by Klazar and Marcus~\cite{Klazar2007}.
First, we recall their result. The original terminology can be simplified because
our argument does not use arbitrary sized matrices and we can focus only on
multidimensional analogues of square matrices.

We call a subset of $[n]^d$ a \emph{$d$-dimensional $(0,1)$-matrix}.

For two $d$-dimensional $(0,1)$-matrices $A \subset [n]^d$ and
$B \subset [k]^d$, we say that $A$ \emph{contains} $B$ if there exist $d$
increasing injections $h_i : [k] \to [n]$, $i \in \{1, 2, \ldots, d\}$,
such that
\[
\text{if }(x_1, x_2, \ldots, x_d) \in B, \text{ then }
  (h_1(x_1), h_2(x_2), \ldots, h_d(x_d)) \in A,
\]
for all $(x_1, x_2, \ldots, x_d) \in [k]^d$.
Otherwise, we say that $A$ \emph{avoids} $B$.

We say that $A \subset [n]^d$ is a \emph{$d$-dimensional permutation of $[n]$}
\[
|A|=n \quad \text{and} \quad \forall_{\substack{x, y \in A\\x \neq y}}
                               \forall_{i\in\{1, 2, \ldots, d\}}\
                               x_i \neq y_i.
\]
In other words, the size of the projection of $A$ onto the $i$-th dimension
equals $n$ for each $i \in \{1, 2, \ldots, d\}$.

\begin{theorem}[Klazar--Marcus~\cite{Klazar2007}]
\label{thm:avoidance}
For every fixed $d$-dimensional permutation $P$ the maximum number of elements
of a $d$-dimensional matrix $A \subset [n]^d$ that avoids $P$ is $\O(n^{d-1})$.
\end{theorem}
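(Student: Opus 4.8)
The plan is to prove Theorem~\ref{thm:avoidance} by following the Marcus–Tardos strategy in its multidimensional form: partition the cube $[n]^d$ into a grid of subcubes, call a subcube \emph{dense} if it contains many $1$-entries, and argue that dense subcubes cannot themselves be too numerous without forcing a copy of the pattern $P$. The recursion then bounds the total number of $1$-entries. I will fix the pattern $P\subset[k]^d$ of size $k$ throughout, and write $s(n)$ for the maximum number of elements of a $P$-avoiding matrix $A\subset[n]^d$; the goal is $s(n)=\O(n^{d-1})$.

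First I would set up the block decomposition. Choose a block width $t$ (a constant depending only on $k$ and $d$, to be fixed later) and assume $t\mid n$ for clarity, handling the general case by the usual monotonicity padding at the end. Partition $[n]^d$ into $(n/t)^d$ axis-aligned subcubes of side $t$, indexed by $[n/t]^d$. For each subcube I record which of its $t^d$ internal cells are occupied; there are at most $2^{t^d}$ possible occupancy patterns. Call a subcube \emph{wide in direction $i$} if its projection onto the $i$-th axis has size at least $k$, i.e.\ the occupied cells hit at least $k$ distinct $i$-coordinates inside the block. The key combinatorial lemma I would isolate is: if a single block is simultaneously wide in all $d$ directions, then it already contains a copy of $P$. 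This is because wideness in direction $i$ lets me greedily select, for each coordinate of $P$, a fresh value, and a standard iterated pigeonhole across the $d$ directions extracts $d$ increasing injections realizing $P$ inside the block.

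The heart of the argument is bounding the number of blocks that are wide in \emph{all} directions and, separately, controlling the blocks that are wide in only some directions. Here I would look at the \emph{contracted matrix} $A'\subset[n/t]^d$ whose entry at a block index is $1$ exactly when that block is nonempty. The crucial observation is that $A'$ must avoid a related (smaller) pattern obtained from $P$, for otherwise one lifts a copy in $A'$ back to a copy in $A$; I would make precise which reduced pattern $A'$ avoids so that the inductive hypothesis applies to $A'$ at scale $n/t$. Each nonempty block contributes at most $t^d$ entries to $A$, so $|A|\le t^d\,|A'| + (\text{correction for wide blocks})$. The standard Marcus–Tardos bookkeeping is to separate blocks by their wideness profile: blocks wide in few directions are few because a large projection in a direction, repeated across a line of blocks, forces the forbidden pattern; blocks that are not wide contribute only a bounded number of occupied coordinates per direction and hence $\O(1)$ cells beyond the skeleton count. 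Summing these contributions yields a recurrence of the shape $s(n)\le c_1\, t^{d}\, s(n/t) + c_2\, n^{d-1}$ for a constant $c_2=c_2(t,k,d)$, and once $t$ is chosen large enough relative to $k$ and $d$ the multiplier satisfies $c_1 t^{d}/t^{d}<$ the decay needed, so the recurrence solves to $s(n)=\O(n^{d-1})$.

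The main obstacle I anticipate is the wideness/profile accounting, i.e.\ showing cleanly that the number of blocks wide in a given nonempty set of directions is only $\O((n/t)^{d-1})$ rather than $\O((n/t)^{d})$, which is exactly the step where avoidance of $P$ is consumed and where the induction on dimension (not just on $n$) enters. Getting the reduced pattern avoided by the contracted matrix $A'$ stated correctly—so that the inductive hypothesis at the \emph{same} dimension $d$ but smaller $n$ can be combined with a dimension-$(d-1)$ input for the wide blocks—is the delicate bookkeeping. I would resolve this by fixing the wideness threshold at $k$ and the block width $t$ as a function of $k$ and $d$ so that a single all-directions-wide block is impossible, thereby forcing every block to fail wideness in at least one direction; the contributions then telescope along that direction and reduce the effective dimension by one, which is precisely what produces the $n^{d-1}$ exponent.
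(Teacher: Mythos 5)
The paper itself does not prove Theorem~\ref{thm:avoidance}; it quotes it as a black box from Klazar--Marcus (and, independently, Methuku--P\'alv\"olgyi), so your proposal has to be judged against the known proof of that cited result. Your overall template --- block decomposition, contraction, wide-block accounting, recursion --- is the right one, but the step you isolate as the ``key combinatorial lemma'' is false. You claim that a single block that is wide in all $d$ directions (its occupied cells hit at least $k$ distinct values of every coordinate) must contain a copy of $P$, and later that choosing $t$ large enough makes such blocks ``impossible.'' Neither holds: already for $d=2$ the block whose $1$-entries are exactly the diagonal $\set{(j,j) : j \in [t]}$ is wide in both directions for every $t\geq k$, yet it avoids the $2$-element anti-diagonal permutation (and every non-monotone pattern), no matter how large $t$ is. Wideness in each coordinate separately gives no control over the \emph{relative order} of coordinates, which is precisely what a permutation pattern prescribes. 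In the Marcus--Tardos argument the pattern is forced not by one wide block but by \emph{many} wide blocks lying in a common line of the block grid: if $k\binom{t}{k}$ blocks in one column of blocks each occupy at least $k$ columns, the pigeonhole principle yields $k$ of them sharing a common $k$-set of columns, and the freedom to decide which of these $k$ blocks supplies which entry realizes an \emph{arbitrary} permutation. That per-line pigeonhole count --- the step where $P$-avoidance is actually consumed and where the $n^{d-1}$ saving originates --- is only gestured at in your write-up, and it is genuinely delicate for $d\geq 3$, since a line of blocks controls only two of the $d$ coordinates and the remaining ones require an additional inductive idea.

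Two further points would need repair even after fixing the key lemma. First, the contracted matrix $A'$ of nonempty blocks avoids $P$ \emph{itself}, not some unspecified ``reduced pattern'': one lifts a copy of $P$ in $A'$ by choosing a single $1$-entry from each relevant block, and it is exactly the permutation property of $P$ (distinct block coordinates in every direction) that guarantees the lifted entries realize $P$; leaving this unidentified leaves the induction without a hypothesis to apply. Second, the recurrence as written, $s(n)\leq c_1 t^{d} s(n/t)+c_2 n^{d-1}$, does not iterate to $\O(n^{d-1})$ --- the stated condition ``$c_1 t^{d}/t^{d}$ small'' is vacuous. What is needed is $s(n)\leq c\, s(n/t)+c_2 n^{d-1}$ with $c< t^{d-1}$, where $c$ comes from the fact that a nonempty block which is not wide in \emph{any} direction carries at most $(k-1)^{d}$ entries; one then chooses $t$ with $t^{d-1}>(k-1)^{d}$, e.g.\ $t=k^{2}$.
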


Now we are ready to prove the following statement, which clearly implies
Theorem~\ref{thm:general-d}.

\begin{theorem}
The largest $d$-dimensional subposet of the $\mathbf{n^{d+1}}$-grid has
$\O(n^d)$ elements.
\end{theorem}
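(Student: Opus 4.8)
The plan is to prove the contrapositive in the form of a dichotomy: any sufficiently large subposet $S$ of the $\mathbf{n^{d+1}}$-grid must have dimension strictly greater than $d$. The goal size to rule out is $\omega(n^d)$, so I want to show that if $|S| = \omega(n^d)$ then $\dim(S) > d$. The strategy mirrors the two-dimensional proof but replaces the ad-hoc pigeonhole/spider argument with the Klazar--Marcus avoidance theorem (Theorem~\ref{thm:avoidance}). The key idea is that a poset of dimension at most $d$ is, by definition, the intersection of $d$ linear extensions, hence embeds into $\setR^d$ with the product order; I want to exploit the single ``extra'' coordinate of the $(d+1)$-cube as a source of incomparabilities that no $d$-dimensional poset can realize.

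First I would set up the correspondence between the last coordinate of the grid and a $d$-dimensional matrix. Given $S \subset [n]^{d+1}$, I think of the first $d$ coordinates as indexing a cell of $[n]^d$ and the last coordinate $z$ as the value stored there. If $|S| = \omega(n^d)$, then by pigeonhole many cells $(x_1,\dots,x_d)\in[n]^d$ contain two or more distinct elements of $S$, differing only in the $z$-coordinate; such a pair is a \emph{comparable} pair sitting inside a single column, and these columns form a $d$-dimensional $(0,1)$-matrix $A$ whose number of $1$-entries is still $\omega(n^d) \ge \omega(n^{d-1})$ after discarding one element per column. By Theorem~\ref{thm:avoidance}, once this count exceeds the linear-in-$n^{d-1}$ threshold, $A$ must \emph{contain} any fixed $d$-dimensional permutation $P$; I would choose $P$ to be a small permutation whose realization forces a forbidden configuration.

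The heart of the argument is to identify a concrete $d$-dimensional obstruction poset $Q$, a ``generalized spider'' analogous to Figure~\ref{fig:spider}, with the property that $\dim(Q) > d$, and to show that the increasing injections $h_1,\dots,h_d$ guaranteed by containment of $P$ locate a copy of $Q$ inside $S$. The natural candidate generalizes the two-dimensional construction: the apex $A$ together with a ``foot'' $B_i$ in each realized cell and a lower partner $C_i$ of $B_i$ obtained from the two elements sharing that column. The combinatorics should be arranged so that the $B_i$ are pairwise incomparable (guaranteed by the distinctness of coordinates in the permutation $P$), each $C_i < B_i$ (from the within-column comparability), and the single apex relates to everything through the one remaining coordinate degree of freedom. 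I would verify, using the standard theory of dimension (e.g.\ via alternating cycles or the list of irreducible posets in~\cite{Trotter1992}), that this $Q$ has dimension exactly $d+1$.

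The main obstacle I expect is twofold. First, Theorem~\ref{thm:avoidance} only asserts containment of a \emph{permutation} matrix, so I must design the target configuration so that a permutation pattern—not an arbitrary matrix—suffices to pin down the copy of $Q$; this forces the apex and the $C_i$ to be recovered from structural side-conditions (the within-column pairs and a monotonicity argument on the $z$-values) rather than baked directly into $P$. Second, and more delicate, is verifying that the dimension of the obstruction $Q$ is genuinely $d+1$ rather than $d$: in higher dimensions there is more room, so the spider must have enough ``legs'' (scaling with $d$) and the incomparabilities must be chosen so that no $d$ linear extensions can simultaneously order all the critical pairs correctly. I would handle this by exhibiting an explicit alternating cycle of the right length or by reducing to a known $(d+1)$-irreducible poset, which is where the careful bookkeeping lies.
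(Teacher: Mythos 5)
Your plan diverges from the paper's proof in a way that leaves the central step unproved. The paper does not pass to a $d$-dimensional matrix of columns at all: it views the subposet $S\subset[n]^{d+1}$ itself as a $(d+1)$-dimensional $(0,1)$-matrix and applies Theorem~\ref{thm:avoidance} in dimension $d+1$. The pattern $P$ is a $(d+1)$-dimensional permutation of $[2(d+1)]$ built from a realizer $L_1,\dots,L_{d+1}$ of the standard example $S_{d+1}$, so that the product order on $P$ is isomorphic to $S_{d+1}$. Since the increasing injections $h_i$ preserve and reflect the product order, containment of $P$ immediately yields a copy of $S_{d+1}$ in $S$; hence a $d$-dimensional $S$ avoids $P$ and has $\O(n^{(d+1)-1})=\O(n^d)$ elements. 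No obstruction poset has to be designed and no dimension computation is needed beyond $\dim(S_{d+1})=d+1$.

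The gap in your version is precisely the part you defer: the ``generalized spider'' $Q$ is never exhibited, and verifying $\dim(Q)>d$ is not bookkeeping --- it is the whole argument. Worse, the natural candidate your construction produces is unlikely to work. If you choose $P$ so that the selected columns are pairwise incomparable in $[n]^d$, then all elements across distinct columns are incomparable in the grid regardless of $z$, and what you obtain is a disjoint union of two-element chains (plus, perhaps, an apex); disjoint unions of $2$-chains have dimension growing only very slowly with the number of chains, so this does not certify dimension $>d$. If instead the columns are comparable, the comparabilities between their elements depend on the $z$-coordinates, over which the containment of $P$ in the column matrix gives you no control whatsoever; you would need additional Ramsey-type extractions, and the existence of a suitable apex in $S$ is likewise not guaranteed (nothing forces $S$ to contain a point below the chosen feet). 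Note also that elements sharing a column of the grid are always comparable, so the minimal--maximal incomparable pairs of a standard example cannot live inside single columns, which undercuts the intended analogy with the $d=2$ spider (which, moreover, lives in the specially ordered poset $\mathbf{C_r}$, not in the grid). Finally, a quantitative slip: a $d$-dimensional matrix in $[n]^d$ cannot have $\omega(n^d)$ entries; the correct count is that $|S|>Cn^d$ forces $\Omega((C-1)n^{d-1})$ multiply occupied columns, which is what you need to exceed the Klazar--Marcus threshold.
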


\begin{proof}
We fix any poset of dimension $d+1$, e.g.~the standard example $S_{d+1}$,
i.e.~the inclusion order of singletons and $d$-element subsets of $[d+1]$.
Now, we fix a realizer of $S_{d+1}$
of size $d+1$, i.e.~a set of $d+1$ linear orders $\{L_1, L_2,\ldots,L_{d+1}\}$
such that $L_1 \cap L_2 \cap \cdots \cap L_{d+1} = S_{d+1}$.
Finally, we construct a $(d+1)$-dimensional permutation
$P \subset [2(d+1)]^{d+1}$ such that $(x_1, x_2, \ldots, x_{d+1}) \in P$ if and
only if there exists $x \in S_{d+1}$ such that $x$ is the $x_i$-th element of
$L_i$ for each $i \in \{1, 2, \ldots, d+1\}$. Note that the natural product
order of elements of $P$ is isomorphic to $S_{d+1}$.

Now, take any $d$-dimensional subposet of the $\mathbf{n^{d+1}}$-grid and denote by $A$ the
set of its elements. In particular, the subposet does not contain $S_{d+1}$ as
a subposet. Note that it implies that $A$ avoids $P$,
thus by Theorem~\ref{thm:avoidance} the size of the subposet is $\O(n^d)$.
\end{proof}

Note that the proof above does not exploit any specific properties of the
standard example, apart from its dimension. In particular, it implies that
every $(d+1)$-dimensional poset $P$ can be found in every subposet of the
$\mathbf{n^{d+1}}$-grid of size $\Omega(n^d)$, with the constant hidden in the
asymptotic notation depending on the choice of $P$.

\section*{Acknowledgments}

We send thanks to Wojciech Samotij and Dömötör Pálvölgyi for pointing us to
useful references.

\bibliographystyle{plain}
\bibliography{paper}

\end{document}